\newcommand{\R}{\mathbb R}
\newcommand{\Sc}{\mathcal S} 
\newcommand{\vertiii}[1]{{\left\vert\kern-0.25ex\left\vert\kern-0.25ex\left\vert #1 
    \right\vert\kern-0.25ex\right\vert\kern-0.25ex\right\vert}}
\DeclarePairedDelimiter\floor{\lfloor}{\rfloor}
\DeclareMathOperator*{\argmin}{argmin}
\newtheorem{assumption}{Assumption} %
\newtheorem{theorem}{Theorem} 
\newtheorem{corollary}{Corollary} 
\newtheorem{remark}{Remark}
\title{Online Sparse Subspace Clustering}
\name{Liam Madden$^\dagger$, Stephen Becker$^\dagger$, Emiliano Dall'Anese$^\star$ 
}
\address{$^\dagger$Department of Applied Mathematics, University of Colorado Boulder \\
$^\star$Department of Electrical, Computer and Energy Engineering, University of Colorado Boulder}
\begin{document}
\maketitle
\begin{abstract}
This paper focuses on the  sparse subspace clustering problem, and develops an online algorithmic solution to cluster data points on-the-fly, without revisiting the whole dataset. The  strategy involves an online solution of a sparse representation (SR) problem to build a (sparse) dictionary of similarities where points in the same subspace are considered ``similar,'' followed by a spectral clustering based on the obtained similarity matrix. When the SR cost is strongly convex,  the online solution converges to within a neighborhood of the optimal time-varying batch solution.
A dynamic regret analysis is performed when the SR cost is not strongly convex. 
\end{abstract}
\begin{keywords}
Subspace clustering; sparse representation; time-varying optimization; online algorithms. 
\end{keywords}
\section{Introduction and Problem Setup}
\label{sec:intro}

Modern data processing tasks aim to extract information from datasets or signals on graphs --  examples include identification of trends or patterns, learning of dynamics and data structures, or methods for  comprehensive awareness of the underlying networks or systems generating the data~\cite{Becker18SPMag,Slavakis14SPMag}. In this domain, the present paper focuses on data processing methods for streams of (possibly high-dimensional) data, with particular emphasis on a setting where underlying computational constraints require one to process data on-the-fly, and with limited access to stored data. 

 One prominent task is clustering, which is utilized to cluster data points based on well-defined metrics modeling similarities (or distances) among data points, or capturing underlying data structures. For example, spectral clustering groups data points based on minimizing cuts of a similarity graph \cite{Lux}. In particular, subspace clustering builds a similarity graph where points in the same subspace are considered ``similar''. It does this by finding either a low-rank representation (LRR) \cite{LRR,Shen} or sparse representation (SSC) \cite{Vidal,Becker} of the data such that the data points are represented as linear combinations of other data points in the same subspace. In this paper, we will consider an \emph{online} SSC methodology, where underlying computational considerations prevent one from solving pertinent optimization problems associated with a given set of data~\cite{Vidal} before a new datum arrives.

To outline the problem concretely, consider $N$-dimensional data points $\{x_{t}, t \in \mathbb{N}\}$ sequentially arriving at times $ t\cdot h$, with $h > 0$ a given interval\footnote{\emph{Notation}: hereafter,  $(\cdot)^T$ denotes transposition. For a given  vector $x \in \mathbb{R}^N$ or matrix $X \in \mathbb{R}^{N\times M}$, 
$\|x\|$ and $\|X\|$ refer to a generic norm,
and $\vertiii{X}$ the spectral norm.
If $X_{ij}$ is the $(i,j)$ entry of $X$, then $\|X\|_F^2=\sum_{i,j} X_{ij}^2$ and $\|X\|_1 = \sum_{i,j} |X_{ij}|$.
The composition of two operators is denoted by $\circ$. Write $f(n)=\mathcal{O}(g(n))$ to denote that for sufficiently large $n$, $\exists c>0$ such that $|f(n)| \le c |g(n)|$.
}.
Assume that the data points lie in (or in the neighbourhood of) $S$ \emph{subspaces} $\{\Sc_i\}_{i=1}^S \subset \R^N$, with $\text{dim}(\Sc_i)=d_i$ for each $i$. 
This paper studies the problem of repeatedly applying SSC to all observed data up to a given time, $\{x_{j}, j \le t\}$ with $\bar{X}_t = [x_{1}, \ldots, x_t]$. SSC is a two-step approach \cite{Vidal}: first, step [S1], based on the self-expressiveness property, a sparse representation (SR) problem is solved to identify the (sparse) coefficients $\{\bar{c}_j\}_{j = 1}^t$ so that $x_j = \bar{X}_t \bar{c}_j$ for all $j = 1, \ldots, t$; that is, data points are represented as linear combinations of data points in the same subspace (and we force the $j^\text{th}$ component of $\bar{c}_j$ to be $0$ to exclude the trivial solution). Second, step [S2], apply spectral clustering based on a similarity matrix $W := |\bar{C}_t| + |\bar{C}_t^T|$, where $\bar{C}_t := [\bar{c}_1, \ldots \bar{c}_t]$ and  $|\cdot|$ is taken entry-wise. However, in the streaming setting, this setting has two drawbacks: 
\begin{enumerate}[wide, labelwidth=!, labelindent=6pt]
    \setlength\itemsep{.3em}
        \setlength\parskip{0pt}
        \setlength\parsep{0pt}
    \item[(d2)]  The dimensions  of $\bar{X}_t$ and $\bar{C}_t$ grow with time, thus increasing the complexity of the associated SR and spectral clustering tasks; and,
    \item[(d2)]  Due to underlying computational complexity considerations, 
steps [S1] and [S2] might not be executed to completion within a time interval $h$ (i.e., before a new datum arrives).
\end{enumerate}

Given (d1)-(d2), we address the problem of developing online algorithmic solutions to carry out steps [S1]-[S2] at each time $t$, based on a \emph{given computational budget}. The first step towards this goal involves the processing of  data points using a ``sliding window'' $X_t:=[x_{t-T+1}, \ldots, x_t]\in \R ^{N\times T}$ of length $T$ (with $T$ determined by the computational budget, as explained later in the paper). Step [S1] is ideally carried out at time $t$ by solving the following SR problem:
\begin{align} \label{eq:srt}
    C_t^* \in \argmin_{C\in\R^{T \times T}}&
    \hspace{.3cm} F_t(C)\equiv\|C\|_1+\frac{\lambda_e}{2}\|X_t-X_tC\|_{\text{F}}^2 \tag{SR$_t$}\\
    \text{s.t.}& \hspace{.3cm} \text{diag}(C)=0 \notag
\end{align}
with $\lambda_e > 0$ a given tuning parameter. 
If $\lambda_e$ is too small, in particular if $\lambda_e\le \|\textrm{vec}(X_t^T X_t)\|_{\infty}^{-1}$, then the minimizer $C_t^*$ may have all-zero columns, which is not informative, hence we always choose $\lambda_e$ sufficiently large.

Solving (SR$_t$) \emph{to convergence} within a time interval $h$ might not be possible for a given computational budget, especially for streams of high-dimensional vectors over a large window $T$. Section~\ref{sec:sparse} will address the design of \emph{online} algorithms, for the case where only one algorithmic step can be performed before a new datum $x_t$ arrives (the case of multiple steps follows easily). 

With  the minimizer (or approximate solution) $C_t^*$, one can compute the matrix $W_t=|C_t^*|+|C_t^*|^T$. Interpreting $W$ as the ``similarity'' matrix of a graph, one can compute the graph Laplacian $L_t = D_t - W_t$ where the degree matrix $D_t$ is a diagonal matrix attained by summing the rows of $W_t$.
The graph Laplacian is then normalized in one of two possible ways: as the symmetric graph Laplacian, $L_{sym,t}=D_t^{-1/2}L_tD_t^{-1/2}$, or as the random walk graph Laplacian, $L_{rw,t} = D_t^{-1} L_t$. We then compute the $S$ trailing eigenvectors of the normalized graph Laplacian and, viewing these eigenvectors as columns, we cluster their rows in $\R ^S$ into $S$ cluster using the k-means algorithm; see, e.g., \cite{Lux} for details on spectral clustering. This clustering is then applied to the original data points. 
Section~\ref{sec:spectral} will elaborate further on this step. 

\section{Online Sparse Representation}
\label{sec:sparse}

The proximal gradient descent algorithm and its accelerated version~\cite{Beck}
have rigorous convergence guarantees and can be applied to equations of the form \eqref{eq:srt}, which we now detail.
Let $f_t (C) = \frac{\lambda_e}{2}\|X_t-X_tC\|^2_F$ for brevity, and notice that $\nabla f_t(C)=\lambda_eX_t^T(X_tC-X_t)$ so $\nabla f_t$ is Lipschitz continuous with constant $M_t = \lambda_e\vertiii{X_t^TX_t}$. Let $n$ be the iteration index of the algorithm, and let $\gamma<\frac{2}{M_t}$. Then the (batch) proximal gradient descent algorithm, used to solve~\eqref{eq:srt} at time $t$, involves the following iterations for $n = 1, 2, \dots$ until convergence:  
\begin{equation}
\label{eq:prox_batch}
    C_{n+1}=\text{prox}_{\gamma\|\cdot\|_1,\text{diag}(\cdot)=0}\circ(I-\gamma\nabla f_t)(C_n)
\end{equation}
where $\text{prox}_{g,\mathcal{X}}(z)=\text{argmin}_{x\in \mathcal{X}}g(x)+\frac{1}{2}\|x-z\|^2$ is the proximal operator defined over a closed convex set $\mathcal{X}$ and for a function $g$. Convergence of~\eqref{eq:prox_batch} to a minimizer $C_t^*$ is shown in, e.g.,~\cite{Combettes}. Furthermore, $F_t(C_n)-F_t(C_t^*)\rightarrow 0$ by the continuity
of $F_t$; see Theorem 10.21 in \cite{Beck} for the rate. 

Consider now the case where only one iteration~\eqref{eq:prox_batch} can be performed per time interval $h$ (see Remark 3 for the extension to multiple steps).
Then, an \emph{online} implementation of the proximal gradient descent algorithm involves the sequential execution of the following step at each time $t$:
\begin{equation}
\label{eq:prox_online}
    C_{t+1}=\underbrace{\text{prox}_{\gamma_t\|\cdot\|_1,\text{diag}(\cdot)=0}}_{P_t}\circ\underbrace{(I-\gamma_t\nabla f_t)}_{G_t}(C_t)
    \vspace{-.9em}
\end{equation}
where the coefficient $\gamma_t$ is selected so that $\gamma_t<\frac{2}{M_t}$.
The difference between \eqref{eq:prox_batch} and \eqref{eq:prox_online} is that $f_t$ changes per iteration in the latter. 
The goal is to demonstrate that the online algorithm~\eqref{eq:prox_online} can \emph{track} the sequence of optimizers $\{C_t^*\}$. In the following, the performance of the online algorithm~\eqref{eq:prox_online} is investigated in two cases: 

\noindent i) The cost of~\eqref{eq:srt} is \emph{strongly convex} for each $t$. In this case, we will derive bounds for $\|C_t-C_t^*\|$, where $C_t^*$ is the \textit{unique} minimizer of~\eqref{eq:srt} for each $t$. 

\noindent ii) The cost of~\eqref{eq:srt} is \emph{not} strongly convex. In this case, we will derive dynamic regret bounds.

Before proceeding, to capture the variability of the clustering solutions, define $\sigma_t = \|C_{t+1}^*-C_t^*\|_F$~\cite{Simonetto}.
As expected, it will be shown that high variability leads to poor tracking performance. The following assumptions are then introduced.

\begin{assumption}
The matrix $X_t^TX_t$ is positive definite for each time $t$ (e.g., $N>T$ and $X_t$ is full rank).
Let $m_t>0$ be the smallest eigenvalue of $X_t^TX_t$.
\end{assumption}

By inspection, the $m_t$ of Assumption 1 is the strong convexity constant of $f_t$.
Based on the assumptions below, the first result is stated next, where $\|\cdot\|$ is taken to be the Frobenius norm.

\begin{theorem}
Under Assumption 1, 
\begin{equation}
\forall t \ge 1,
    \|C_t-C_t^*\|\leq \Tilde{L}_{t-1}\left(\|C_0-C_0^*\|+\sum_{\tau=0}^{t-1} \frac{\sigma_{\tau}}{\Tilde{L}_{\tau}}\right)
\end{equation}
where $L_t=\max\{|1-\gamma_tm_t|,|1-\gamma_tM_t|\}$, $\Tilde{L}_t=\prod_{\tau=0}^t L_{\tau}$.
\end{theorem}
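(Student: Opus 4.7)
The plan is to establish a one-step recursion of the form $e_{t+1} \le L_t e_t + \sigma_t$ with $e_t := \|C_t - C_t^*\|_F$, and then unroll it. I would proceed in four steps.

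First, I would show that the composite operator $T_t := P_t \circ F_t$ is $L_t$-Lipschitz. Under Assumption~1, $f_t$ is $m_t$-strongly convex and $M_t$-smooth, so a standard computation (expanding $\|F_t(A)-F_t(B)\|_F^2$ and using co-coercivity) gives that $F_t = I - \gamma_t \nabla f_t$ is Lipschitz with constant $L_t = \max\{|1-\gamma_t m_t|,|1-\gamma_t M_t|\}$ whenever $\gamma_t < 2/M_t$, which is strictly less than $1$. The proximal map $P_t$ is non-expansive because it is the proximal operator of a proper closed convex function restricted to the closed convex set $\{C : \mathrm{diag}(C)=0\}$; hence $T_t$ inherits the Lipschitz constant $L_t$.

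Second, I would invoke the fixed-point characterization of the minimizer: because $C_t^*$ solves (SR$_t$), the first-order optimality conditions (equivalently, the standard proximal fixed-point identity for $\min \|\cdot\|_1 + f_t$ on $\{\mathrm{diag}=0\}$) give $C_t^* = T_t(C_t^*)$ for any $\gamma_t > 0$. Combined with the update rule $C_{t+1} = T_t(C_t)$ and the triangle inequality,
\begin{equation*}
\|C_{t+1} - C_{t+1}^*\|_F \le \|T_t(C_t) - T_t(C_t^*)\|_F + \|C_t^* - C_{t+1}^*\|_F \le L_t\,e_t + \sigma_t .
\end{equation*}

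Third, I would unroll this recursion by induction on $t$. Iterating gives
\begin{equation*}
e_t \le \Bigl(\prod_{j=0}^{t-1} L_j\Bigr) e_0 + \sum_{\tau=0}^{t-1} \Bigl(\prod_{j=\tau+1}^{t-1} L_j\Bigr)\sigma_\tau ,
\end{equation*}
and substituting $\tilde{L}_{t-1} = \prod_{j=0}^{t-1} L_j$ and $\prod_{j=\tau+1}^{t-1} L_j = \tilde{L}_{t-1}/\tilde{L}_\tau$ factors out $\tilde{L}_{t-1}$ to yield the stated bound.

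I do not anticipate any deep obstacles; the main care-points are (i) verifying cleanly that the composition of the proximal map for $\|\cdot\|_1$ with the projection onto the zero-diagonal subspace is itself a non-expansive proximal operator (it is, since the indicator of the zero-diagonal constraint is convex and separable from $\|\cdot\|_1$ on the diagonal/off-diagonal split), and (ii) pinning down the Lipschitz constant of $F_t$ as $L_t$ rather than a weaker estimate, which needs the strong convexity from Assumption~1.
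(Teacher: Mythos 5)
Your proposal is correct and follows essentially the same route as the paper's proof: the $L_t$-Lipschitz bound on $F_t = I - \gamma_t\nabla f_t$ from strong convexity and smoothness, the nonexpansiveness of the prox step, the fixed-point identity $C_t^* = P_tF_tC_t^*$, the triangle-inequality recursion $\|C_{t+1}-C_{t+1}^*\|\le L_t\|C_t-C_t^*\|+\sigma_t$, and the unrolling. The only (cosmetic) difference is that you justify the Lipschitz constant of $F_t$ via co-coercivity, whereas the paper writes the cruder intermediate bound $1-2\gamma_t m_t+\gamma_t^2M_t^2\le L_t^2$; your version is if anything the cleaner one.
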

\begin{proof}
Define $G_t$ and $P_t$ as in \eqref{eq:prox_online}, 
so $C_{t+1}=(P_t\circ G_t) C_t$. Observe, for any $C,C'\in\R^{T\times T}$,
\begin{align*}
    \|G&_t(C)-G_t(C')\|^2\\
    &= \|C-\gamma_t \nabla f_t(C)-C'+\gamma_t \nabla f_t(C')\|^2\\
    &\leq (1-2\gamma_t m_t + \gamma_t^2 M_t^2)\|C-C'\|^2\\
    &\leq L_t^2\|C-C'\|^2.
\end{align*}
Also, by optimality, $C_t^*$ is a fixed point, so $C_t^* = P_tG_tC_t^*$.
Therefore, one has that $\|C_{t+1}-C_t^*\| = \|P_tG_tC_t-P_tG_tC_t^*\| \leq \|G_tC_t-G_tC_t^*\| \leq L_t\|C_t-C_t^*\|$, 
where the second inequality comes from the nonexpansiveness of the prox operator~\cite{Combettes}. Finally, we get
\begin{align*}
    \|C_{t+1}-C_{t+1}^*\| &\leq \|C_{t+1}-C_t^*\|+\|C_{t+1}^*-C_t^*\|\\
    &\leq L_t\|C_t-C_t^*\|+\sigma_t
\end{align*}
and we apply this inequality recursively.
\end{proof}

\begin{corollary}
Define $\hat{L}_t = \underset{\tau=0,...,t}{\max}L_{\tau}$
and $\hat{\sigma}_t = \underset{\tau=0,...,t}{\max}\sigma_{\tau}$. If Assumption 1 holds, then, for each $t$: 
\begin{align}
    \|C_t-C_t^*\|\leq \left(\hat{L}_{t-1}\right)^t\|C_0-C_0^*\|+\frac{\hat{\sigma}_t}{1-\hat{L}_{t-1}}.
\end{align}
If $m_\tau \ge m$ and $M_\tau\le M$ and $\gamma_\tau$ is chosen $\gamma_\tau = 2/(m_\tau+M_\tau)$ for all $\tau = 0, \ldots, t$, then $\hat{L}_t\le(M-m)/(M+m) < 1$.
\end{corollary}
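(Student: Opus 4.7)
The plan is to start from the bound proved in Theorem~1 and replace each $L_\tau$ appearing there by its uniform upper bound $\hat L_{t-1}$, then handle the resulting sum as a geometric series. The second claim is a separate, purely algebraic fact about the optimal step size for gradient-type iterations on a $(m,M)$-strongly convex/smooth objective.

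For the inequality, I would rewrite the conclusion of Theorem~1 as
\[
\|C_t-C_t^*\|\le \tilde L_{t-1}\|C_0-C_0^*\| + \sum_{\tau=0}^{t-1}\frac{\tilde L_{t-1}}{\tilde L_\tau}\,\sigma_\tau .
\]
Since $\tilde L_{t-1}=\prod_{s=0}^{t-1}L_s$ is a product of $t$ factors each at most $\hat L_{t-1}$, one has $\tilde L_{t-1}\le (\hat L_{t-1})^t$, giving the first term of the corollary. For each summand, $\tilde L_{t-1}/\tilde L_\tau=\prod_{s=\tau+1}^{t-1}L_s$ is a product of $t-1-\tau$ factors bounded by $\hat L_{t-1}$, so the sum is at most $\hat\sigma_t\sum_{k=0}^{t-1}(\hat L_{t-1})^k$. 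Bounding the finite geometric sum by the infinite series $1/(1-\hat L_{t-1})$ (which is legitimate when $\hat L_{t-1}<1$, as ensured in the second half of the statement) yields the claimed $\hat\sigma_t/(1-\hat L_{t-1})$.

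For the claim about the step size, the observation is that with $\gamma_\tau=2/(m_\tau+M_\tau)$ both $1-\gamma_\tau m_\tau$ and $1-\gamma_\tau M_\tau$ evaluate to $\pm(M_\tau-m_\tau)/(M_\tau+m_\tau)$, so $L_\tau=(M_\tau-m_\tau)/(M_\tau+m_\tau)$. Then I would note that the map $(a,b)\mapsto(b-a)/(b+a)$ is strictly decreasing in $a$ and strictly increasing in $b$ on $\{0<a\le b\}$, so substituting $a=m\le m_\tau$ and $b=M\ge M_\tau$ only enlarges the quotient, giving $L_\tau\le (M-m)/(M+m)<1$. Taking a maximum over $\tau$ then transfers this bound to $\hat L_t$.

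I do not expect any real obstacle here: the argument is essentially ``pull $\hat L_{t-1}$ out of a product and sum a geometric series,'' and the step-size computation is a one-line algebraic identity together with elementary monotonicity. The only mild care needed is to justify extending the partial geometric sum to an infinite one, which is exactly where the hypotheses $m_\tau\ge m$, $M_\tau\le M$ and the specific choice of $\gamma_\tau$ enter so that $\hat L_{t-1}<1$.
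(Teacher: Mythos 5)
Your proposal is correct and follows exactly the route the paper intends: the paper's own proof is the one-line remark ``Follows from Thm.\ 1 and the geometric series,'' and your argument simply fills in those details (bounding each factor of $\tilde L_{t-1}$ and of $\tilde L_{t-1}/\tilde L_\tau$ by $\hat L_{t-1}$, summing the geometric series, and the algebraic identity $L_\tau=(M_\tau-m_\tau)/(M_\tau+m_\tau)$ with the monotonicity step). Your explicit note that extending to the infinite geometric series requires $\hat L_{t-1}<1$ is a point the paper leaves implicit, so no gap remains.
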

\begin{proof}
Follows from Thm. 1 and the geometric series.
\end{proof}

These results closely follow the analysis of \cite{Simonetto,DallAnese,BoydPrimer}, applying it to SSC.

\begin{remark} Under Assumption 1, it can be shown that $F_t(C_t)-F_t(C_t^*)\leq\frac{M_t}{2}\|C_t-C_t^*\|^2$ \cite[Thm. 10.29]{Beck}.
\end{remark}

In the following, we consider the case where the cost of~\eqref{eq:srt} is \emph{not} strongly convex. It is clear that contractive arguments cannot be utilized in this case since~\eqref{eq:prox_online} is no longer a strongly monotone operator. 
Again, we use $\|\cdot\|$ for the Frobenius norm, and $\|g\|_\infty=\sup_{x}\,|g(x)|$.

\vspace{-0mm}
\begin{theorem}
Let $\hat{C}_t^*=\frac{1}{t}\sum_{\tau=0}^{t-1}C_{\tau}^*$, $\rho_t(\tau)=\|\hat{C}_t^*-C_{\tau}^*\|$, $\delta_t=\|f_{t+1}-f_t\|_\infty$, and $M=\max_t M_t$. Set $\gamma_t=\frac{1}{M}$. Then
\vspace{-2mm}
\begin{align*}
    &\frac{1}{t}\sum_{\tau=0}^{t-1}\left(F_{\tau}(C_{\tau})-F_{\tau}(C_{\tau}^*)\right)\leq \frac{M}{2t}\|C_0-\hat{C}_t^*\|^2+\frac{1}{t}\Biggl(F_0(C_0)\\
    &\hspace{1cm}-F_{t-1}(C_t)+\sum_{{\tau}=0}^{t-2}\delta_{\tau}+\sum_{{\tau}=0}^{t-1}\rho_t({\tau})(T+\frac{M}{2}\rho_t({\tau}))\Biggr)
\end{align*}
\end{theorem}
\begin{proof}
From the descent lemma \cite[Thm. 10.16]{Beck}, we have 
\vspace{-2mm}
\begin{align*}
    \frac{1}{t}\sum_{{\tau}=0}^{t-1}\left(F_{\tau}(C_{{\tau}+1})-F_{\tau}(\hat{C}_t^*)\right)\leq \frac{M}{2t}\|C_0-\hat{C}_t^*\|^2
\end{align*}
Using the Lipschitz continuity of $\nabla f_t$, $\partial\|\cdot\|_1\in[-1,1]^{T^2}$, and the Cauchy-Schwartz inequality, we get
\vspace{-2mm}
\begin{align*}
    \frac{1}{t}\sum_{{\tau}=0}^{t-1}\left(F_{\tau}(\hat{C}_t^*)-F_{\tau}(C_{\tau}^*)\right)\leq \frac{1}{t}\sum_{{\tau}=0}^{t-1}\rho_t({\tau})(T+\frac{M}{2}\rho_t({\tau}))
\end{align*}
\vspace{-2mm}
And, rearranging, we find\vspace{-2mm}
\begin{align*}
    &\frac{1}{t}\sum_{\tau=0}^{t-1}\left(F_{\tau}(C_{\tau})-F_{\tau}(C_{{\tau}+1})\right) = \frac{1}{t}(F_0(C_0)-F_{t-1}(C_t)\\
    &\hspace{3cm}+\sum_{{\tau}=0}^{t-2}\left(f_{{\tau}+1}(C_{{\tau}+1})-f_{\tau}(C_{{\tau}+1})\right)\\ 
    &\hspace{1cm}\leq \frac{1}{t}\left(F_0(C_0)-F_{t-1}(C_t)+\sum_{{\tau}=0}^{t-2}\delta_{\tau}\right)
\end{align*}
\vspace{-2mm}
Adding these together gives us the result.
\end{proof}

\begin{corollary}
Define $\hat{\rho}_t=\underset{\tau=0,...,t-1}{\max}\rho_t(\tau)$ and\\ $\hat{\delta}_t=\underset{\tau=0,...,t-2}{\max}\delta_{\tau}$. Then
\begin{align*}
    &\frac{1}{t}\sum_{{\tau}=0}^{t-1}\left(F_{\tau}(C_{\tau})-F_{\tau}(C_{\tau}^*)\right)\leq\frac{1}{t}\biggl(\frac{M}{2}\|C_0-\hat{C}_t^*\|^2\\
    &+F_0(C_0)-F_{t-1}(C_t)\biggr)+\frac{M\hat{\rho}_t^2}{2}+T\hat{\rho}_t+\hat{\delta}_t.
\end{align*}
\end{corollary}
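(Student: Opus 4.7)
The plan is to obtain this corollary as an immediate consequence of Theorem~2 by replacing the two summations $\sum_{i=0}^{t-1}\rho_t(i)^2$ and $\sum_{i=0}^{t-2}\delta_i$ with uniform-in-$i$ upper bounds. By definition, $\rho_t(i) \le \hat{\rho}_t$ for every $i \in \{0,\dots,t-1\}$, so $\sum_{i=0}^{t-1}\rho_t(i)^2 \le t\,\hat{\rho}_t^2$. Analogously, $\delta_i \le \hat{\delta}_t$ for every $i \in \{0,\dots,t-2\}$, hence $\sum_{i=0}^{t-2}\delta_i \le (t-1)\,\hat{\delta}_t \le t\,\hat{\delta}_t$.

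Substituting these two estimates into the inequality of Theorem~2, the coefficient $\frac{M}{2t}$ in front of $\sum_{i=0}^{t-1}\rho_t(i)^2$ collapses to $\frac{M\hat{\rho}_t^2}{2}$, and the coefficient $\frac{1}{t}$ in front of $\sum_{i=0}^{t-2}\delta_i$ collapses to at most $\hat{\delta}_t$. The remaining terms $\frac{M}{2t}\|C_0-\hat{C}_t^*\|^2$ and $\frac{1}{t}(f_0(C_0)-f_{t-1}(C_t))$ are carried over unchanged and grouped under a single prefactor of $\frac{1}{t}$, which gives exactly the stated form of the bound.

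There is really no technical obstacle here: the argument is one line of $\max$-bounding followed by substitution. The only minor bookkeeping point worth flagging is the off-by-one in the second sum — it has $t-1$ rather than $t$ terms — which I would absorb using the loose estimate $(t-1)/t \le 1$ to match the clean $\hat{\delta}_t$ appearing in the statement, rather than introducing a sharper $\frac{t-1}{t}\hat{\delta}_t$ factor.
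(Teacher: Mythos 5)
Your argument is correct and is exactly the intended one: the paper states this corollary without proof precisely because it follows from Theorem~2 by the one-line bounds $\sum_{i=0}^{t-1}\rho_t(i)^2\le t\,\hat{\rho}_t^2$ and $\sum_{i=0}^{t-2}\delta_i\le (t-1)\hat{\delta}_t\le t\,\hat{\delta}_t$ that you give. Your handling of the off-by-one in the second sum is also the right call for matching the stated bound.
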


\noindent The term $\hat{C}_t^*$ serves as a center point of all the $C^*_\tau$ and is the most meaningful ``best overall'' point. The corollary says that if $\hat{\delta}_t$ and $\hat{\rho}_t$ are well-behaved (i.e., the function changes slowly) then, on average, $F_t(C_t)$ tracks within a constant term of $F_t(C_t^*)$.

\begin{remark}
If $(C_t^*)$ is bounded, then $\hat{\rho}_t$ converges and $\hat{C}_t^*$ has a convergent subsequence. In that case, we can replace them with their limits in the bound in Corollary 2. The bound is only meaningful, though, when the $\delta_t$'s are finite. One way to make them finite is to impose boundedness with respect to the infinity norm as another constraint in~\eqref{eq:srt}. This particular constraint can be incorporated into our closed-form proximal projection \cite[Prop.\ 24.47]{Combettes}.
\end{remark}

\begin{remark} If $f_t$ is not strongly convex (which it will not be if $T>N$), then we can add a Tikhonov term to make it strongly convex. For example, we could add $\frac{\lambda_r}{2}\|C\|^2$. While this will provide the stronger result of Theorem 1, it incurs an error in the minimizer. That is, $\|C_{r,t}-C_t^*\|\leq\|C_{r,t}-C_{r,t}^*\|+\|C_{r,t}^*-C_t^*\|$ where $C_{r,t}$ and $C_{r,t}^*$ are the regularized tracking sequence and regularized minimizer sequence respectively.
\end{remark}

\begin{remark} \label{rmk:severalIterations}
If we take more than one iteration per time step, then we can modify $(f_t)$ accordingly in order to use the results in this paper. For example, with 2 iterations per time step, define $\Tilde{f}_t=f_{\floor*{t/2}}$. Alternatively, we can modify the Theorems. For example, if we take $n_t$ iterations at time $t$, then we just have to redefine $\Tilde{L}_t=\prod_{\tau=0}^t L_{\tau}^{n_{\tau}}$ in Theorem 1.
\end{remark}

Note that we did not consider accelerating our algorithm. In the non-strongly convex case, accelerated methods rely on global structure not just local descent. Because of this, it is not obvious that adapting an accelerated method to the online setting would lead to better tracking. However, this is a further research direction that we are currently exploring.

\section{Spectral Clustering}
\label{sec:spectral}

The main factor in determining how many iterations to take in step [S1] is the ratio between the costs of steps [S1] and [S2]. There are papers that explore online spectral clustering \cite{onlinespec}, but the results require relatively small changes in the graph. There are no such guarantees here. For example, when the oldest data point is thrown out and a new one added, all connections to the previous data point are thrown out as well, new connections have to be made for all of the points that were previously connected to the old data point, and connections have to be made for the new data point. While the end result of the spectral clustering may barely change, the change in the graph is catastrophic. Thus, 
we do a full batch spectral clustering operation,
and leave a more general framework for online spectral clustering as a future research direction.

Note that the proximal operator in (2) simplifies to $\text{proj}_{\text{diag}(\cdot)=0}\circ \text{prox}_{\gamma_t\|\cdot\|_1}$. The closed form expression for the latter proximal operator is soft-thresholding: $\text{prox}_{\gamma_t\|\cdot\|_1}C = \text{sign}(C)(|C|-\gamma)_+$ component-wise where $(a)_+=\text{max}(a,0)$. Thus, the cost of each iteration of step [S1] is dominated by the gradient descent sub-step, and so the total cost of each step is $\mathcal{O}(NT^2)$ operations. If, instead of applying the proximal gradient operator just once to $C_t$, we apply it $n_t$ times, then the step at time $t$ will cost $\mathcal{O}(n_tNT^2)$ operations.

The cost of step [S2] depends on what method we use to compute the specified eigenvectors. 
An upper bound on the cost is $\mathcal{O}(T^3)$ which can be achieved by classical dense algorithms and gives \emph{all} $T$ eigenvalues $\lambda$. To find $S \ll T$ eigenvalues approximately, the power method or Lanczos iterative methods can be used \cite{NMMC}.
First consider that $L_{rw}v=\lambda v$ iff $v-D^{-1}Wv=\lambda v$ iff $D^{-1}Wv=(1-\lambda)v$. Also, by the Gershgorin disc theorem, the eigenvalues of $L_{rw}$ are in [0,2] and the eigenvalues of $D^{-1}W$ are in [-1,1]. Thus, for $D^{-1}W$, we want the eigenvectors corresponding to the eigenvalues near 1. The leading eigenpairs of $D^{-1}W$ should correspond to the trailing eigenpairs of $L_{rw}$ as long as the trailing eigenvalues of $L_{rw}$ are closer to 0 than the leading eigenvalues are to 1. If this is not the case, then we have to use the power method to compute \textit{more} than S eigenpairs, so that we can take the eigenvectors corresponding to the S largest \textit{positive} eigenvalues. The power method costs $\mathcal{O}(nnz\cdot niter\cdot S)$ in the ideal case where we do not have to compute extra eigenpairs. Here, $nnz$ denotes the number of nonzero elements and $niter$ denotes the number of iterations to reach convergence.

There are efficient methods for computing the trailing eigenpairs of $L_{rw}$ directly. In particular, \cite{3methods} found that the Jacobi-Davidson method was superior to the Lanczos method, in terms of computation time, for spectral clustering. Both  methods cost $\mathcal{O}(nnz\cdot niter)$, but $niter \approx S$ can vary.

The number of nonzeros, in the ideal case, can be estimated by the subspace dimensions, $d_i$. For a given subspace, the minimum number of points needed to represent another point, as long as it isn't cohyperplanar with a strict subset of the points, is exactly the dimension of the subspace. Thus, we can say that $nnz(C^*)\geq \mathcal{O}(\sum_i d_i T_i^t)$ where $T_i^t$ is the number of data points in $\Sc_i$ at time $t$. The same can be said of $W$, $L$, and $L_{rw}$.
In the case where each subspace has the same dimension, $d$, and the same number of points in it, $T/S$, then $nnz(C^*) \ge \mathcal{O}(dT)$ (and recall $d\le N$).
This means in the best case, [S2] costs $\mathcal{O}(dTS)$, while a single step of [S1] costs $\mathcal{O}(NT^2)$ so this suggests choosing $T \approx dS/N$ to balance the costs of the two steps; if $T$ is smaller than this, multiple steps to solve [S1] can be taken.

\section{Numerical Results}
\label{sec:numerical}

We performed tests on both synthetic data and the Yale Face Database \cite{Yale}. The synthetic data was composed of $S=10$ 5-dimensional subspaces in $R^{50}$ ($d=5$), each with 50 points. Noise was added to make the simulation more realistic. The sliding window had a capacity of $T=400$ data points and we took 100 time steps. 
The Yale Face Database has $S=38$ subspaces, and we 
let the sliding window have a capacity of $T=500$ points and took 200 time steps. The points in the Yale Face Database are in $\R^{2016}$. For both datasets, we took 50 iterations of the optimization algorithm per time step (cf.\ Remark~\ref{rmk:severalIterations}).

For the synthetic data, $T>N$, so the cost function is not strongly convex. On the other hand, for the Yale Face Database, $T<N$, so the cost function \textit{is} strongly convex. The numerical results show that for both datasets, though, the objective value trajectory converges to a region above the minimum trajectory. This can be seen in Figure 1.

\begin{figure}[htb]

\begin{minipage}[b]{.48\linewidth}
  \centering
  \centerline{\includegraphics[width=4.0cm]{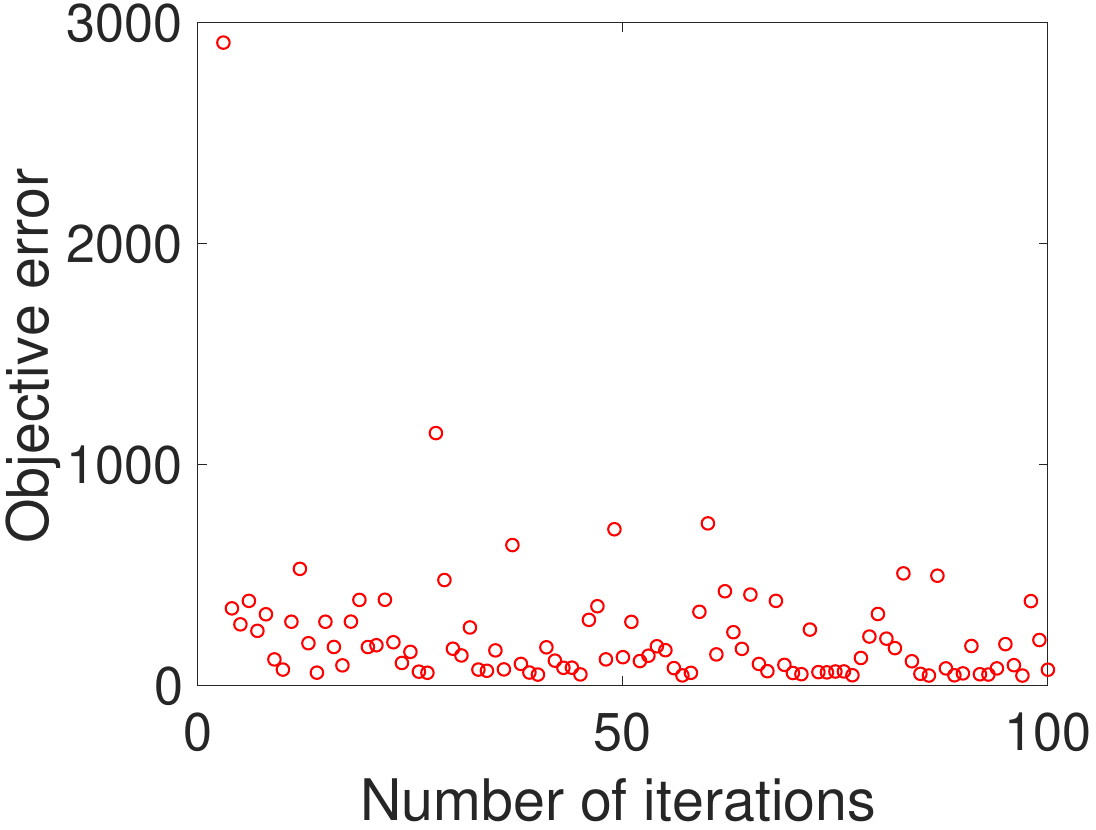}}
  \centerline{(a) Synthetic dataset}\medskip
\end{minipage}
\hfill
\begin{minipage}[b]{0.48\linewidth}
  \centering
  \centerline{\includegraphics[width=4.0cm]{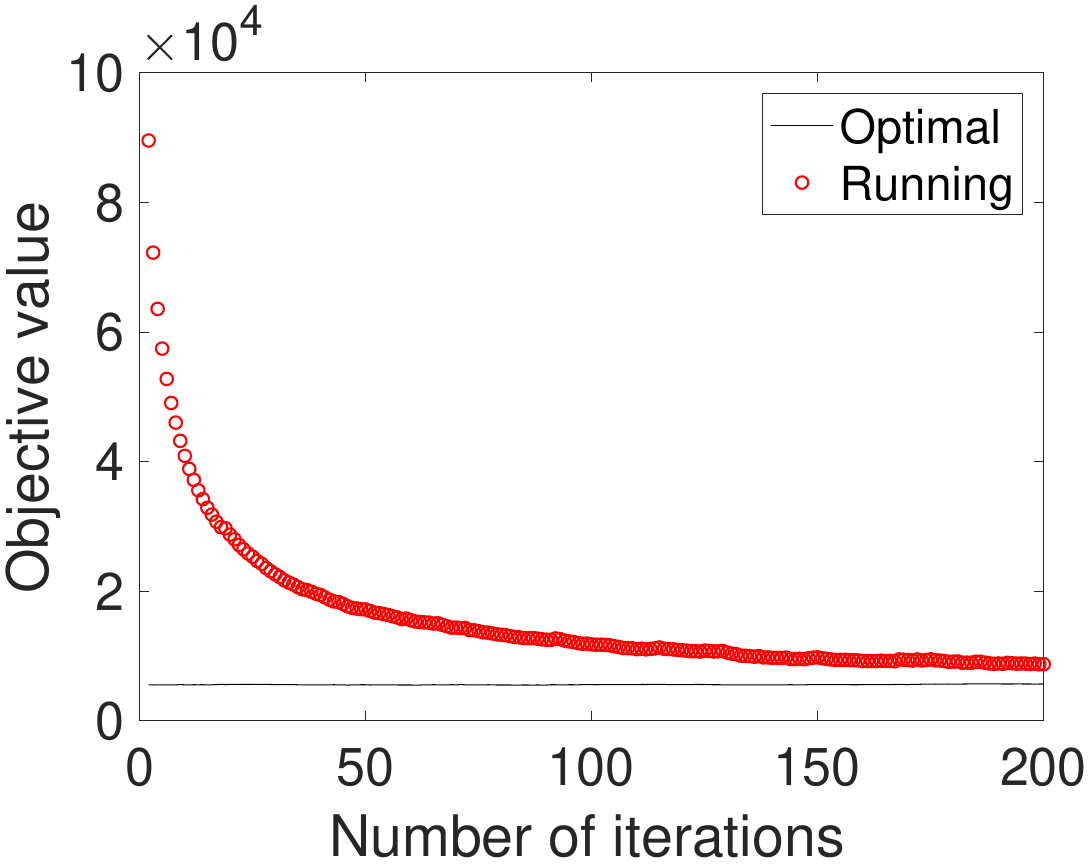}}
  \centerline{(b) Yale Face dataset}\medskip
\end{minipage}
\caption{Objective value of tracking sequence and actual time-varying minimum.}
\label{fig:res}
\end{figure}

The objective error seems to be driving whether or not the clustering error converges. Figure 2 shows the clustering error.

\begin{figure}[htb]

\begin{minipage}[b]{.48\linewidth}
  \centering
  \centerline{\includegraphics[width=4.0cm]{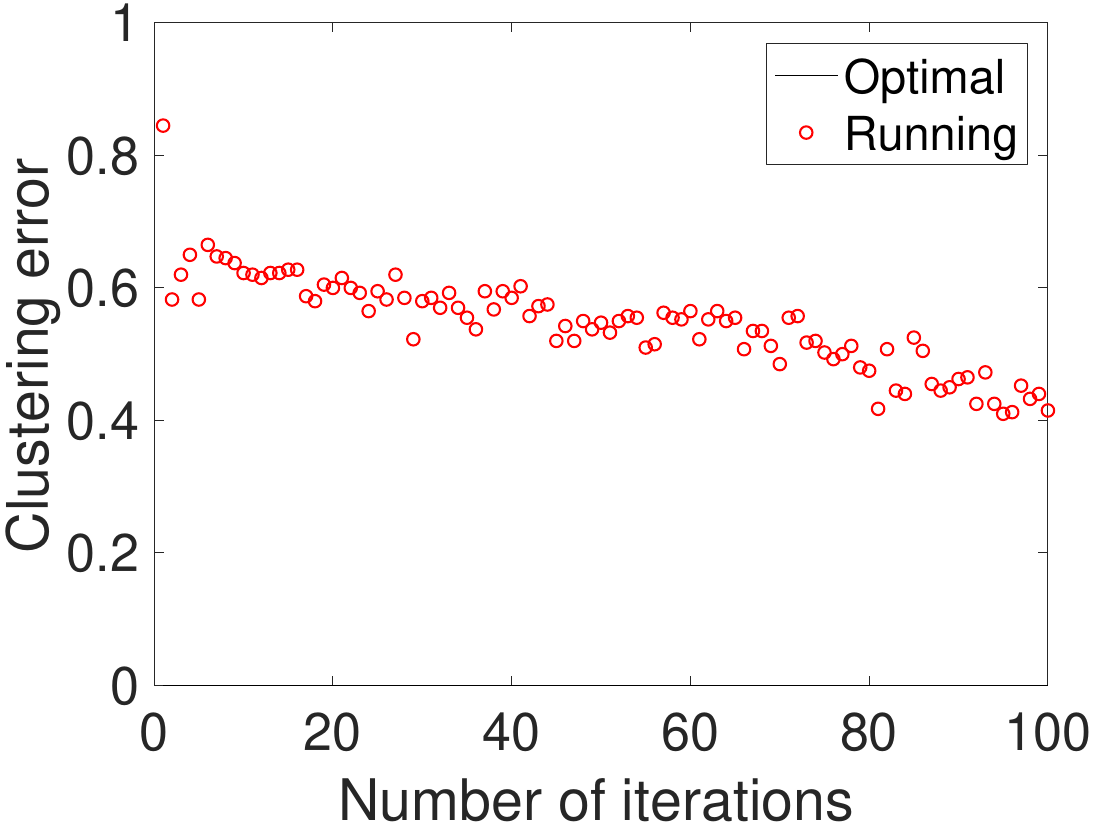}}
  \centerline{(a) Synthetic dataset}\medskip
\end{minipage}
\hfill
\begin{minipage}[b]{0.48\linewidth}
  \centering
  \centerline{\includegraphics[width=4.0cm]{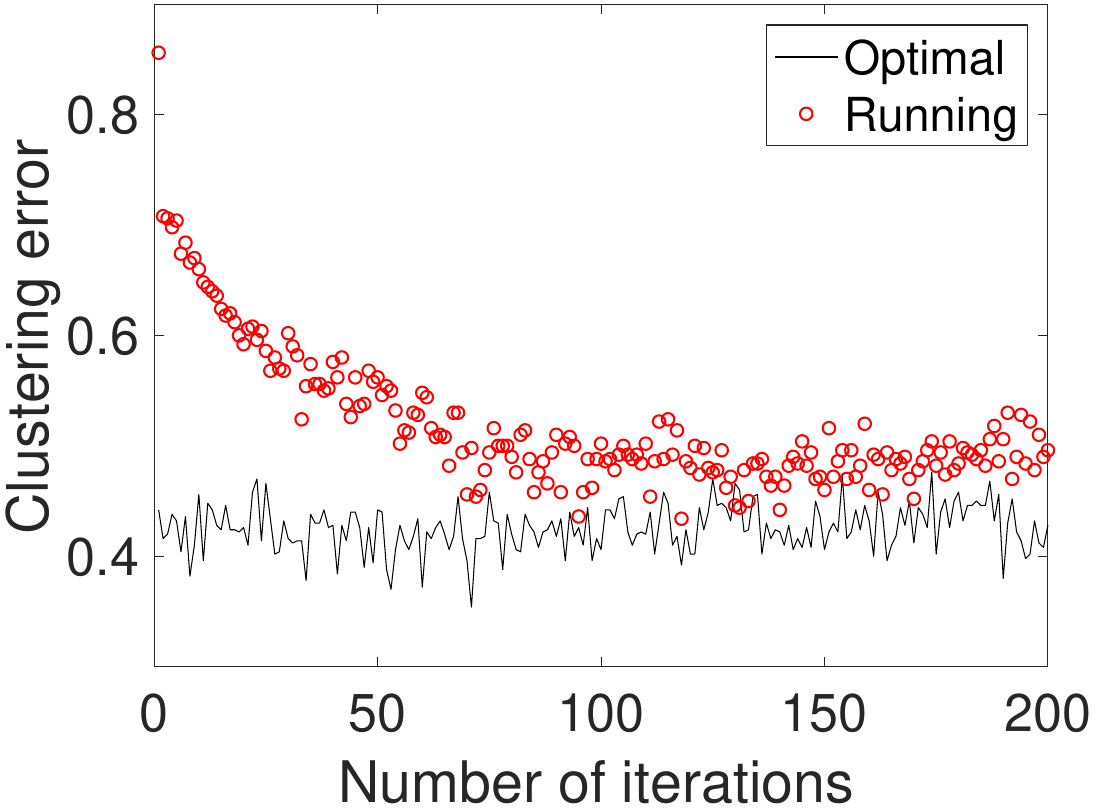}}
  \centerline{(b) Yale Face dataset}\medskip
\end{minipage}
\caption{Clustering error of both the tracking sequence and minimizer sequence.}
\label{fig:res}
\end{figure}

For both datasets, the clustering error of the tracking algorithm decreases. However, decreasing the number of iterations per time step too much causes the clustering error to no longer decrease. This suggests that there is some maximum value of the objective error for the clustering error to decrease.

Finally, for the synthetic dataset, step [S2] took 50 times as long as step [S1]. For the Yale Face dataset, step [S2] took 3 times as long as step [S1]. While we took a sufficient number of iterations in order for the clustering error of our algorithm to be small, for an actual system, its dynamics would dictate the number of iterations. The spectral clustering time would be subtracted from the length of a time step, and this value would be divided by the time for step [S1].

\bibliographystyle{IEEEbib}
\bibliography{main}
\nocite{Sleijpen}
\nocite{NystSpec1}
\nocite{NystSpec2}
\nocite{sketched}
\nocite{Nesterov}
\end{document}